\providecommand\@dotsep{5}
\def\listtodoname{List of Todos}
\def\listoftodos{\@starttoc{tdo}\listtodoname}
\newtheorem{theorem}{Theorem}[section]
\newtheorem{proposition}[theorem]{Proposition}
\newtheorem{corollary}[theorem]{Corollary}
\newtheorem{lemma}[theorem]{Lemma}
\newcommand{\mycomment}[1]{}
  \theoremstyle{definition}
\newtheorem{definition}[theorem]{Definition}
\newtheorem{remark}{Remark}
\newcommand{\Z}{\mathbb{Z}}
\newcommand{\calF}{{\mathcal F}}
\newcommand{\calG}{{\mathcal G}}
\newcommand{\vcyc}{V\text{\tiny{\textit{CYC}}}}
\newcommand{\fin}{F\text{\tiny{\textit{IN}}}}
\newcommand{\nbeq}{\begin{equation}}
\newcommand{\neeq}{\end{equation}}
\newcommand{\beq}{\begin{equation*}}
\newcommand{\eeq}{\end{equation*}}
\DeclareMathOperator{\hd}{hd}
\DeclareMathOperator{\cd}{cd}
\DeclareMathOperator{\gd}{gd}
\DeclareMathOperator{\gdfin}{\underline{gd}}
\DeclareMathOperator{\gdvc}{\underline{\underline{gd}}}
\DeclareMathOperator{\Aut}{Aut}
\DeclareMathOperator{\stab}{Stab}
\begin{document}

\title[]{On the virtually cyclic dimension of\\ normally poly-free groups}

\author[Rita Jiménez Rolland]{Rita Jiménez Rolland}

\address{Instituto de Matemáticas, Universidad Nacional Autónoma de México. Oaxaca de Juárez, Oaxaca, México 68000}
\email{rita@im.unam.mx}
\email{porfirio.leon@im.unam.mx}

\author[Porfirio L. León Álvarez ]{Porfirio L. León Álvarez}



\date{}


\keywords{poly-free groups, classifying spaces, families of subgroups, Bass-Serre Theory}

\begin{abstract}
In this note we give an upper bound for the virtually cyclic dimension of any  normally poly-free group  in terms of its length. In particular, this implies that 
virtually even Artin groups of FC-type admit a finite dimensional model for the classifying space with respect to the family of virtually cyclic subgroups.
\end{abstract}
\maketitle


\section{Introduction}
\noindent Given a group $G$, we say that a collection $\calF$ of subgroups of $G$ is a \emph{family} if it is non-empty, closed under conjugation, and under taking subgroups. For a given a family $\calF$ of subgroups of $G$,  a $G$-CW-complex $X$ is a model for the classifying space $E_{\calF}G$  if  all of its isotropy groups belong to $\calF$  and the fixed point set $X^H$ is contractible whenever $H$ belongs to $\calF$.  It can be shown that a model for the classifying space   $E_{\calF}G$ always exists and it is unique up to $G$-homotopy equivalence.  In particular, the classifying spaces for the family $\fin$ of finite subgroups  of $G$ and the family $\vcyc$ of virtually cyclic subgroups of $G$, denoted by $\underline{E}G$ and $\underline{\underline{E}}G$ respectively,  are relevant due to their connection with the Farrell-Jones and Baum-Connes isomorphism conjectures; see for example \cite{LR05}.

The \emph{$\calF$-geometric dimension} of $G$  is defined as $$\gd_{\calF}(G)=\min\{n\in \mathbb{N}| \text{ there is a model for } E_{\calF}G \text{ of dimension } n \}.$$
For the trivial family and for the families $\fin$ and $\vcyc$, the number $\gd_{\calF}(G)$ is usually denoted by $\gd(G)$, $\gdfin(G)$ and $\gdvc(G)$, respectively. The $\calF$-geometric dimension has its algebraic counterpart, the \emph{$\calF$-cohomological dimension} $\cd_{\calF}(G)$, which can be defined in terms of Bredon cohomology. 
 The $\calF$-geometric dimension and the $\calF$-cohomological dimension satisfy the following inequality: 
\[\cd_{\calF}(G)\leq \gd_{\calF}(G)\leq \max\{\cd_{\calF}(G), 3\}.\]

In this note we study the geometric dimensions $\gd(G)$, $\gdfin(G)$ and $\gdvc(G)$ when $G$ is a normally poly-free group. A group $G$ is called  \emph{poly-free} if there exists a finite filtration  of $G$ by subgroups   $$1=G_0<G_1<\cdots< G_{n-1}< G_n=G$$  such that $G_i$ is normal in $G_{i+1}$, and the quotient $G_{i+1}/G_i$ is a free group, for $0\leq i\leq n-1$.  If  we have that each $G_i$ is normal in $G$, 
 we say that $G$ is \emph{normally poly-free}. If there is a filtration such that the free groups $G_{i+1}/G_i$ are of finite rank, we say that $G$ is \emph{poly-f.g.-free}. We define the \emph{length of $G$} as the minimum $n\in \mathbb{N}$ such that there is a filtration as before.  Poly-free groups are torsion-free, locally indicable, have finite asymptotic dimension  and satisfy the Baum–Connes Conjecture with coefficients \cite[Remark 2]{MR4246787}. Furthermore, it has been proved that normally poly-free groups satisfy the Farrell-Jones conjecture \cite[Theorem A]{MR4246787}, see also \cite{MR1797585},  \cite[Theorem 1.1]{MR4565703}, \cite[Theorem 2.3.7]{JPSS}. 
 
 In the literature, there are several examples of  poly-free and  normally poly-free groups. For instance, free groups and free by infinite cyclic groups are normally poly-free groups of length $\leq 2$.  Poly-$\mathbb{Z}$ groups are a particular case of  poly-free groups and their geometric and virtually cyclic dimensions have been completely characterized in \cite[Section 5]{LW12}. Furthermore,  pure braid groups of surfaces with nonempty boundary are known to be normally poly-free \cite{MR1797585} and so are even Artin groups of  $FC$-type  \cite[Theorem 3.18]{MR3900773}, \cite[Theorem A]{MR4360030}. It is an open question \cite[Question 2]{MR1714913} whether all Artin groups are virtually poly-free. 

We give  upper bounds for the geometric dimensions $\gd(G)$, $\gdfin(G)$ and $\gdvc(G)$ of any normally poly-free group $G$ in terms of its length.   

\begin{theorem}\label{cyclic:dimension:nor:poly:free:1} Let $G$ be a poly-free group of length $n\in\mathbb{N}$. 
\begin{enumerate}[a)]
    \item The geometric dimension $\gd(G)=\gdfin(G)$ is bounded above by $n$. Furthermore, if $G$ is normally poly-f.g.-free, then $\gd(G)= n$.
\item If  $G$ is  a  normally  poly-free group,  then the virtually cyclic dimension satisfies $$\gdvc(G)\leq 3(n-1)+2.$$ 
\end{enumerate}
 \end{theorem}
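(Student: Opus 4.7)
My plan is to prove both parts by induction on the length $n$, using the short exact sequence $1 \to G_{n-1} \to G \to F \to 1$ where $F = G/G_{n-1}$ is free. For part (a), I first note that every poly-free group is torsion-free, so $\gd(G) = \gdfin(G)$ automatically. The base case $n=1$ is immediate, since a free group has geometric dimension $1$. For the inductive step, the induction hypothesis gives a $K(G_{n-1},1)$ of dimension at most $n-1$; combining it with the $1$-dimensional $K(F,1)$ via the fibration $K(G_{n-1},1) \to K(G,1) \to K(F,1)$ produces a $K(G,1)$ of dimension at most $n$. For the lower bound in the normally poly-f.g.-free case, I would show by induction on the filtration that $\cd(G) = n$, using the Lyndon--Hochschild--Serre spectral sequence together with the nonvanishing of $H^1$ of a nontrivial finitely generated free group with coefficients in its group ring; since $\gd(G) \geq \cd(G)$, this yields $\gd(G) \geq n$.

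For part (b), I again proceed by induction on $n$. The base case $n=1$ is the classical bound $\gdvc(F) \leq 2$ for free groups, due to Juan-Pineda and Leary. For the inductive step I would establish the inequality $\gdvc(G) \leq \gdvc(N) + 3$ for every extension $1 \to N \to G \to F \to 1$ with $N$ normal in $G$ and $F$ free, with $N$ normally poly-free of length $n-1$. My strategy is twofold. First, exploit the normality of $N$ to produce a $G$-CW-model $Y$ for $\evc N$ of dimension $\gdvc(N)$, and use the Bass--Serre tree $T$ of $F$ (with $G$-action through the quotient $p\colon G \to F$) to form the product $T \times Y$, a $G$-CW-complex of dimension at most $\gdvc(N)+1$. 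An isotropy analysis shows that $G$-stabilizers of $T \times Y$ lie in $\vcyc(N)$, and fixed-point sets for $H \in \vcyc(N)$ are contractible (since $T^H = T$ and $Y^H$ is contractible), so this is a model for $E_{\vcyc(N)}G$. Second, apply the Lück--Weiermann transfer theorem with $\calF = \vcyc(N)$ and $\calG = \vcyc(G)$ to pass from this intermediate model to $\evc G$, which adds at most $2$ further dimensions to account for the commensurability classes of infinite cyclic subgroups of $G$ projecting nontrivially to $F$. Combining with the inductive hypothesis $\gdvc(N) \leq 3(n-2)+2$ then yields $\gdvc(G) \leq 3(n-1)+2$.

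The main obstacle is controlling the commensurator contribution in the Lück--Weiermann formula. Concretely, the extra commensurability classes are those of infinite cyclic $C = \langle g\rangle \leq G$ with $p(g)$ of infinite order; for each such class one must bound $\gd_{\vcyc(N) \cup [C]}(N_G[C])$. Since $F$ is free, the commensurator of $p(C)$ in $F$ is a maximal cyclic subgroup, so $N_G[C]$ fits into an extension $1 \to N \cap N_G[C] \to N_G[C] \to \mathbb{Z} \to 1$, whose kernel is a subgroup of $N$; analyzing this extension via the induction hypothesis applied to $N \cap N_G[C]$ together with the two-dimensional model for $\evc$ of a free group gives the required bound. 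Verifying that this contribution is dominated by $\gdvc(N) + 2$ and that the first term of the Lück--Weiermann maximum is dominated by $\gdvc(N) + 2$ is the technical heart of the inductive step, and recovers the claimed inequality $\gdvc(G) \leq 3(n-1)+2$.
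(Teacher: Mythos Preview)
For part (a) your approach coincides with the paper's: both induct on $n$ via the extension $1\to G_{n-1}\to G\to F\to 1$ and use $\gd(G)\leq \gd(G_{n-1})+\gd(F)$. For the lower bound the paper simply quotes the equality $\hd_{\mathbb{Q}}(G)=n$ for normally poly-f.g.-free groups, whereas you outline an LHS spectral-sequence argument; both are fine.

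For part (b) your strategy is genuinely different and has a real gap. The paper inducts in the \emph{opposite} direction: it uses the extension $1\to G_1\to G\xrightarrow{p} G/G_1\to 1$, pulls back $\vcyc(G/G_1)$ along $p$, and applies L\"uck--Weiermann's nested-family bound. The point is that the maximal members of the pulled-back family are the groups $p^{-1}(L)$ with $L$ trivial or infinite cyclic, i.e.\ \emph{free-by-cyclic} groups, and the paper proves separately (via L\"uck's condition~(C)) that $\gdvc(\text{free-by-}\mathbb{Z})\leq 3$. Induction then applies cleanly to the quotient $G/G_1$, which is normally poly-free of length $\leq n-1$.

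Your induction peels off the top quotient $F=G/G_{n-1}$ instead, applying the hypothesis to $N=G_{n-1}$. Two problems arise. First, your claim that normality of $N$ produces a $G$-CW-model $Y$ for $\evc N$ of dimension $\gdvc(N)$ is unjustified: a model for $\evc N$ is an $N$-space and need not carry a $G$-action of the same dimension, so the product $T\times Y$ is not a $G$-complex as stated. Second, and more seriously, the commensurator step does not close. For infinite cyclic $C$ with $p(C)\neq 1$, you correctly place $N_G[C]$ in an extension $1\to N\cap N_G[C]\to N_G[C]\to\mathbb{Z}\to 1$, but the kernel is only known to be normally poly-free of length $\leq n-1$; the induction hypothesis gives $\gdvc(N\cap N_G[C])\leq 3(n-2)+2$, and running your own argument on this $\mathbb{Z}$-extension yields at best $3(n-1)+2$ for $N_G[C]$ --- the very bound you are trying to prove for $G$. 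You have not supplied any independent mechanism (analogous to the paper's condition-(C) bound for free-by-$\mathbb{Z}$ groups) to cap the commensurator contribution strictly below this. Finally, your base case cites Juan-Pineda--Leary, which handles only finitely generated free groups; the paper proves the general case $\gdvc(F)\leq 2$ separately (via the tree-plus-coning argument of \cref{virt:dim:of:grps:gd:1}), since the free factors in a poly-free filtration need not be finitely generated.
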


By \cite[Theorem 2.4]{MR1757730}, it follows from Theorem \ref{cyclic:dimension:nor:poly:free:1}(b) that any virtually normally poly-free group  admits a finite dimensional model  for the classifying space with respect to the family of virtually cyclic subgroups. Some examples include:

 \begin{itemize}
  \item  Virtually even  Artin groups of FC-type.  
 \item  The braid group $B_n(S)$ and the pure braid group $P_n(S)$ of $n$ strings on a connected compact surface $S$ with non-empty boundary.   
\end{itemize}

For Artin braid groups $B_n=B_n(\mathbb{D}^2)$ and pure braid groups $P_n=P_n(\mathbb{D}^2)$ the virtually cyclic geometric dimension  was explicitly computed in \cite{Ramon:Juan}. The existence of finite dimensional models for $\underline{\underline{E}}B_n(S)$ and  $\underline{\underline{E}}P_n(S)$ also follows from \cite[Theorem 1.4]{Nucinkis:Petrosyan} and the Birman exact sequence when the underlying surface $S$ is hyperbolic. However, the upper bounds that we get from \cref{cyclic:dimension:nor:poly:free:1}, when the surface $S$ has non-empty boundary, only depends of $n$ and not of the topology of the underlying surface $S$.  

\begin{remark} We don't expect  the upper bound obtained  in \cref{cyclic:dimension:nor:poly:free:1} (b) to be optimal.  For instance, for $n\ge2$,  if $G$ is a poly-$\mathbb{Z}$ group of length $n$, then $\gdvc(G)\leq n+1$, see \cite[Theorem 5.13]{LW12}. Furthermore, the pure braid group $P_n$ is normally poly-free of length $\leq n-1$ and $\gdvc(P_n)=n$ for $n\geq 3$, see \cite[Corollary 5.9]{Ramon:Juan}.  When the group $G$ is free-by-cyclic, we prove in \cref{free:by:cyclic:dimension} below that $\gdvc(G)\leq 3$.
\end{remark}

Our \cref{cyclic:dimension:nor:poly:free:1} is proved by an induction argument on the length of the normally poly-free group $G$. The proof of part (b) uses, as the base for the induction, that the virtually cyclic geometric dimension of a non-abelian free groups is equal to $2$.  This is known to hold for finitely generated non-abelian free groups \cite{MR2248975}, and we prove it for general non-abelian free groups in \cref{cyclic:dim:countable:free:group}. Our argument uses the next result that may be of independent interest.

\begin{theorem}\label{virt:dim:of:grps:gd:1}
Let $G$ be a group such that $\gdfin(G)=1$, then $\gdvc(G)\leq 2$.  
 Moreover, if $G$ is not virtually cyclic and has an element of infinite order,  then  $\gdvc(G)= 2$.
\end{theorem}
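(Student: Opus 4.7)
The proof has two parts. For the upper bound $\gdvc(G) \leq 2$, the plan is to apply the Lück--Weiermann pushout construction of \cite{LW12} with $\fin \subseteq \vcyc$ and commensurability as the equivalence relation on infinite virtually cyclic subgroups. The hypothesis $\gdfin(G) = 1$ yields a tree $T$ on which $G$ acts simplicially with finite vertex stabilizers. Given an infinite virtually cyclic subgroup $H \leq G$, choose $g \in H$ of infinite order; since vertex stabilizers are finite, $g$ acts hyperbolically on $T$ with axis $A_g$. I would then show that the commensurator $N_G[H]$ equals the setwise stabilizer $\stab_G(A_g)$, and that the natural homomorphism $\stab_G(A_g) \to \mathrm{Isom}(A_g)$ has finite kernel (the intersection of vertex stabilizers along $A_g$) and discrete image (because $g$ translates $A_g$ by a positive integer). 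Hence $N_G[H]$ is itself virtually cyclic, so $N_G[H]$ belongs to the family $\vcyc[H]$ of subgroups of $N_G[H]$ that are either finite or commensurable with $H$; this gives $\gd_{\vcyc[H]}(N_G[H]) = 0$. Restricting $T$ to $N_G[H]$ shows $\gdfin(N_G[H]) \leq 1$, and the Lück--Weiermann dimension estimate then produces
\[\gdvc(G) \leq \max\bigl\{\gdfin(G),\ \gdfin(N_G[H]) + 1,\ \gd_{\vcyc[H]}(N_G[H])\bigr\} \leq 2.\]

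For the lower bound, suppose further that $G$ is not virtually cyclic and contains an element $g$ of infinite order. The plan is to exhibit a subgroup $F \leq G$ isomorphic to the free group of rank two, and then invoke the known equality $\gdvc(F) = 2$ from \cite{MR2248975} together with the fact that $\gdvc$ is monotone under passage to subgroups (since restricting a model of $\evc G$ to $F$ yields a model of $\evc F$). To produce such an $F$, I would verify the hypotheses of a Tits-type alternative for actions on trees: $g$ acts hyperbolically on $T$, so a hyperbolic element exists; $G$ does not fix a vertex (otherwise $G$ would be finite); $G$ does not stabilize $A_g$ setwise (otherwise $G \leq \stab_G(A_g)$ would be virtually cyclic by the upper-bound argument); and $G$ does not fix an end of $T$ (otherwise the Busemann cocycle would provide a homomorphism $G \to \Z$ whose kernel fixes pointwise a ray and is therefore finite, again making $G$ virtually cyclic). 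A ping-pong argument applied to $g$ and a suitable conjugate $hgh^{-1}$ whose axis shares no end with $A_g$ then produces $F \cong F_2$ inside $G$.

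The main technical point is to justify, within the lower-bound argument, that a conjugate of $g$ with axis sharing no end with $A_g$ does indeed exist; this rests on the fact that two distinct axes in a tree share at most one end, combined with the absence of a $G$-invariant end established above. Once these ingredients are assembled, the two bounds combine to give $\gdvc(G) = 2$ under the additional hypotheses.
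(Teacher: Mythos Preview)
Your proof is correct; both halves differ in presentation from the paper's argument. For the upper bound, the paper builds a $2$-dimensional model for $\evc G$ by hand: starting from the tree $T$, it cones off each geodesic line that carries a cocompact action of an infinite virtually cyclic subgroup and verifies directly that the resulting complex $\hat T$ has contractible fixed sets for every virtually cyclic subgroup. Your use of the Lück--Weiermann pushout is the same construction in abstract form---once $N_G[H]=\stab_G(A_g)$ is shown to be virtually cyclic, the LW pushout collapses precisely to this coning-off---so the two upper-bound arguments are essentially equivalent. For the lower bound the paper is much shorter: it picks $g\notin\stab_G(\gamma)$, where $\gamma$ is the axis of an infinite-order element $h$, notes that $\langle h,g\rangle$ cannot be virtually cyclic by uniqueness of axes, and then cites \cite[Lemma~2.2]{MR4472883} to conclude $2\le\gdvc(\langle h,g\rangle)\le\gdvc(G)$. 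Your route through an explicit free subgroup and ping-pong is more self-contained but longer; one step worth tightening is the claim that the Busemann kernel ``fixes pointwise a ray and is therefore finite''---this is true, but it is not immediate that the elliptic elements of the kernel have a \emph{common} fixed ray: you need that conjugation by the hyperbolic element $g$ identifies the finite groups $K\cap\stab(v_n)$ along its axis, while the fixed-end condition forces these groups to be nested, so they all coincide.
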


\begin{remark}
There are groups $G$ that are not virtually free and that satisfy $\gdfin(G)=1$, and hence $\gdvc(G)\leq 2$ by our \cref{virt:dim:of:grps:gd:1}.  If $G$ is the fundamental group of a graph of groups in which all vertex groups are finite and such that the orders of the vertex groups are not uniformly bounded, then $\gdfin(G)=1$, but the group $G$  is not virtually free; see for instance \cite[Theorem 7.3]{MR0564422} and references therein. An example of such a group is  $G=\Z_2*(\Z_2\times \Z_2)*(\Z_2\times \Z_2\times \Z_2)*\cdots$, which is not finitely generated.
In fact, since such $G$ is not virtually cyclic and has elements of infinite order, \cref{virt:dim:of:grps:gd:1} implies that $\gdvc(G)=2$. 
\end{remark}

\subsection*{Acknowledgements}
 We would like to thank 
Jesús Hernández Hernández for useful discussions. We are specially grateful to Luis Jorge Sánchez Saldaña for several comments on a draft of this paper and for suggesting the argument to prove \cref{lemma:condition C}. This paper was partially written while the first author was visiting Northeastern University with funding from the National University of Mexico through a DGAPA-UNAM PASPA sabbatical fellowship. She is grateful to DGAPA-UNAM and thanks the NEU Department of Mathematics and Solomon Jekel for their hospitality. The second author was supported by a doctoral scholarship of the Mexican Council of Humanities, Science and Technology (CONAHCyT).  We are grateful for the financial support of DGAPA-UNAM grant PAPIIT IA106923.

\section{Proof of Theorems \ref{cyclic:dimension:nor:poly:free:1} and \ref{virt:dim:of:grps:gd:1}}

We first prove \cref{cyclic:dimension:nor:poly:free:1}(a) about the geometric dimension of poly-free groups.  Notice that since poly-free groups are torsion free, then $\gdfin(G)=\gd(G)$.\\

\noindent{\bf \cref{cyclic:dimension:nor:poly:free:1}(a) }{\it If $G$ is a poly-free group of length $n$, then $\gd(G)\leq n$. Furthermore, if $G$ is normally poly-f.g.-free of lenght $n$ we have that $\gd(G)=n$.}
\begin{proof} The proof is by induction on the length $n$ of the poly-free group. If $n=1$, then $G$ is a non-trivial free group and   $\gd(G)= 1$. Suppose that the claim is true for poly-free groups of length $n\leq k-1$ 
and let $G$ be a poly-free group of length $k$. By definition, there is a filtration of $G$ by subgroups  $1=G_0\leq G_1\leq \cdots\leq G_{k-1}\leq G_k=G$   
 such that $G_{i+1}/G_i$ is a free group for $0\leq i\leq k-1$. Consider the following short exact sequence $$1\to G_{k-1}\to G\to G/G_{k-1}\to 1.$$
Notice that $G/G_{k-1}$ is a free group with $\gd(G/G_{k-1})=1$ and  $\gd(G_{k-1})=k-1$ by  induction hypothesis since $G_{k-1}$ is a poly-free group of length $\leq k-1$. Then, it follows from  \cite[Theorem 5.15]{Lu05}  that  $$\gd(G)\leq \gd(G_{k-1})+\gd(G/G_{k-1}) \leq (k-1) +1=k.$$ 
If $G$ is a normally  poly-f.g.-free of length $n$,  it follows from  \cite[Theorem 16]{MR596323}  that the homological dimension of $G$ over $\mathbb{Q}$ is given by $\hd_{\mathbb{Q}}(G)=n$. Since $\gd(G)\geq\cd(G)\geq \hd_{\mathbb{Q}}(G)$, the furthermore part of the statement follows.  
\end{proof}

 For $n=1,2$, the equality $\gd(G)=n$ holds for examples of normally poly-free groups $G$ of length $n$ that may not be finitely generated. That is the case when $G$ is a free group or a free-by-cyclic group; see \cref{free:by:cyclic:dimension} (a) below.  

The proof of \cref{cyclic:dimension:nor:poly:free:1} (b) is also done by induction on the length of the poly-free group.  We first need  some preparatory results. We prove in \cref{cyclic:dim:countable:free:group} that the virtually cyclic dimension of non-abelian free groups is equal to $2$ and show in \cref{free:by:cyclic:dimension} that it is at most $3$ for free-by-cyclic groups. 

In order to include non-finitely generated groups, we prove first \cref{virt:dim:of:grps:gd:1}. 
 Our argument was inspired by \cite[Section 6]{MR4472883}. It uses the fact that every virtually cyclic group acting on a simplicial tree $T$ fixes a vertex or acts co-compactly on a unique geodesic line; see \cite[Lemma 1.1]{DS99}. Recall that a geodesic line of a tree $T$ is a simplicial embedding of $\mathbb{R}$ in $T$, where  $\mathbb{R}$ has a vertex set $\mathbb{Z}$ and an edge joining any two consecutive integers.\medskip

\noindent{\bf 
\cref{virt:dim:of:grps:gd:1}}
Let $G$ be a group such that $\gdfin(G)=1$, then $\gdvc(G)\leq 2$.  
 Moreover, if $G$ is not virtually cyclic and has an element of infinite order,  then  $\gdvc(G)= 2$.
\begin{proof}
Let $G$ be a group with $\gdfin(G)=1$. Then there is a simplicial tree $T$ which is a model for the classifying space $\underline{E}G$. We promote $T$ to a model for $\underline{\underline{E}}G$ by coning-off on $T$ some geodesics as we now explain.

First we prove that the set-wise stabilizer $\stab_{G}(\gamma)$ of any geodesic line $\gamma$ in $T$ is a virtually cyclic group. Consider $\gamma$ with the simplicial structure induced by $T$. Note that the group $\Aut(\gamma)$ of simplicial automorphisms of $\gamma$ is isomorphic to the infinite dihedral group $D_{\infty}$. Since $\stab_{G}(\gamma)$ acts  by simplicial automorphisms on  $\gamma$, then  there is a  homomorphism of groups $\varphi \colon \stab_{G}(\gamma) \to \Aut(\gamma)=D_{\infty}$. Let us denote by $D$ the image of $\varphi$ and notice that it is a virtually cyclic group since it is  subgroup of a $D_{\infty}$. On the other hand,  $\ker(\varphi)$ fixes point-wise the vertices of $\gamma$. Since $T$ is a model for $\underline{E}G$, we have that $\ker(\varphi)$ must be a finite group.  It  follows  from  the short exact sequence
$$1\to \ker(\varphi) \to \stab_{G}(\gamma)\xrightarrow[]{\varphi} D\to 1 $$ that $\stab_{G}(\gamma)$ is a virtually cyclic group.

Since $T$ is a model for $\underline{E}G$, any infinite virtually cyclic subgroup of $G$ must act co-compactly in a unique geodesic line of $T$.  
Let  $\mathcal{A}$ be the collection of all the geodesics of  $T$  that admit a  co-compact action of an infinite virtually cyclic subgroup of $G$. Consider the space $\Hat{T}$ given by the following  homotopy $G$-push-out:
\begin{equation*}\label{model:free:group:1}
 \xymatrix{\displaystyle\bigsqcup_{\gamma\in \mathcal{A}}\gamma\ar[d] \ar[r]  & T\ar[d]\\
\displaystyle\bigsqcup_{\gamma\in \mathcal{A}} \{*_{\gamma}\}  \ar[r] & \Hat{T} 
}
\end{equation*}
If $H\leq G$ acts co-compactly on the geodesic line $\gamma$ of $T$ and $g\in G$, then $gHg^{-1}$ acts co-compactly on $g\gamma$. It follows that both $\bigsqcup_{\gamma\in \mathcal{A}}\gamma$ and $\bigsqcup_{\gamma\in \mathcal{A}}\{*_\gamma\}$ are $G$-CW-complexes, and therefore the space  $\Hat{T}$ is a   $G$-CW-complex of dimension 2. 

We claim that  $\Hat{T}$ is a model for $\underline{\underline{E}}G$.  To show this  we need to check the following:
\begin{enumerate}[a)]
    \item For all $x\in \Hat{T}$ the isotropy group $\stab_G(x) \in \vcyc$.
    \item The fixed point set $\Hat{T}^{H}=\{x\in \Hat{T} | \ hx=x, \text{for all }h\in H\}$ is contractible if $H\in\vcyc$.
\end{enumerate}

Item a) follows from the construction of  $\Hat{T}$. Indeed, we have two cases $x\in T$ or $x\in \Hat{T}-T$. Observe that in the first case we have that the isotropy group $\stab_G(x)$ is finite. In the second case, if $x\in \Hat{T}-T$ is a conic point, then \(\stab_G(x)\) is infinite virtually cyclic; otherwise the isotropy \(\stab_G(x)\) is contained in the stabilizer of a conic point, hence it is virtually cyclic.

It remains to show that item b) holds. 
Let $H\in \vcyc$  and consider the action of $H$ on the tree $T$ obtained by restricting the action of $G$ on $T$. It follows that $H$ fixes a vertex of $T$ or  it acts co-compactly on a unique geodesic line $\gamma$ of $T$. 

If $H$ fixes a vertex of $T$, then $H$ is a finite group since $T$ is a model for $\underline{E}G$. This implies that $T^H$ is a non-empty subtree of $T$. Therefore  $\hat{T}^{H}$ is obtained from  $T^H$ possibly coning-off some geodesics segments, we conclude that $\hat{T}^{H}$  is contractible. 

Otherwise, we have that $H$ acts co-compactly in a unique geodesic line $\gamma$ on $T$, then $\gamma\in \mathcal{A}$ and $*_{\gamma}\in \Hat{T}^H$. Notice that for any $x\in \Hat{T}-\bigsqcup_{\gamma\in \mathcal{A}}\{*_\gamma\}$ the isotropy group $\stab_G(x)$ is finite.  Indeed, if $x\in T$ this claim follows from the fact that $T$ is a model for $\underline{E}G$. Now if $x\notin T$, then  there  is some  $\gamma \in \mathcal{A}$ such that  $x$ lies in the interior of the segment from a point in $\gamma$ to the coin point $*_\gamma$. In particular, $\stab_G(x)$ is contained in the stabilizer of this segment given by the intersection of the isotropy groups of the end points, one of which is a finite group. Then  $\Hat{T}^{H}\subseteq \bigsqcup_{\gamma\in \mathcal{A}} \{*_\gamma\}$ and by the uniqueness of the geodesic line $\gamma$, we conclude that $\Hat{T}^{H}= \{*_\gamma\}$. 

We now prove the moreover part of the theorem.  Assume that $G$ is not virtually cyclic and let $h\in G$ be an element of infinite order. The cyclic subgroup of $G$ generated by $h$ must act co-compactly in a unique geodesic line $\gamma$ of $T$. Since the stabilizer of any geodesic line in $T$ is virtually cyclic and the group $G$ is not, there exists an element $g\in G$ that is not in $\stab_{G}(\gamma)$. Notice that the subgroup $H$ generated by $h$ and $g$ 
cannot be virtually cyclic. Indeed, if $H$ is virtually cyclic, then it must act co-compactly in a unique geodesic line $\beta$ of $T$, and so do its cyclic subgroups generated by $h$ and $g$.  By uniqueness of the the geodesic $\gamma$ stabilized by $h$, we must have that $\beta=\gamma$. Then $H$ stabilizes $\gamma$ and, in particular,  $g\gamma=\gamma$ which contradicts the fact that we are taking $g\notin \stab_{G}(\gamma)$.  By \cite[Lemma 2.2]{MR4472883} we have that $2\le \gdvc(H) \le \gdvc(G)$.
\end{proof}

The following result was proved in \cite{MR2248975}  for finitely generated virtually free groups that are not virtually cyclic. 
\begin{corollary}\label{cyclic:dim:countable:free:group}
Let $G$ be a virtually free group which is not virtually cyclic. Then $\gdvc(G)=2$.
\end{corollary}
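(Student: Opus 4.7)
The plan is to derive the corollary directly from \cref{virt:dim:of:grps:gd:1}. Since $G$ is already assumed not to be virtually cyclic, it suffices to verify the two remaining hypotheses of that theorem: that $\gdfin(G)=1$, and that $G$ contains an element of infinite order.

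To establish $\gdfin(G)=1$, I would appeal to Bass-Serre theory. By the Karrass-Pietrowski-Solitar structure theorem, together with its generalization to the non-finitely-generated setting due to Dunwoody (as referenced in the preceding remark via \cite{MR0564422}), every virtually free group is isomorphic to the fundamental group of a graph of finite groups, and therefore acts simplicially on its Bass-Serre tree $T$ with finite vertex and edge stabilizers. This tree $T$ is a one-dimensional model for $\underline{E}G$, so $\gdfin(G)\leq 1$. On the other hand, since $G$ is not virtually cyclic it is in particular not finite, so $\gdfin(G)\geq 1$, and equality follows.

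For the second hypothesis, let $F\leq G$ be a free subgroup of finite index. Since $G$ is infinite, so is $F$; hence $F$ is a nontrivial free group and contains elements of infinite order, which remain of infinite order in $G$. Both hypotheses of \cref{virt:dim:of:grps:gd:1} are now satisfied, so applying that theorem gives $\gdvc(G)\leq 2$, and the moreover clause of \cref{virt:dim:of:grps:gd:1} upgrades this inequality to the desired equality $\gdvc(G)=2$.

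The main subtlety, if any, lies in invoking the structure theorem in the non-finitely-generated case: for finitely generated virtually free groups the Karrass-Pietrowski-Solitar theorem directly produces the required action on a tree, but for general virtually free groups one must rely on the more delicate accessibility-type results. Once the existence of an action on a tree with finite stabilizers is granted, the remainder of the deduction is purely formal.
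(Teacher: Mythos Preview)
Your proposal is correct and follows essentially the same route as the paper: both arguments invoke the structure theorem for (possibly non-finitely-generated) virtually free groups from \cite{MR0564422} to obtain $\gdfin(G)=1$, and then apply \cref{virt:dim:of:grps:gd:1}. You are slightly more explicit than the paper in verifying the infinite-order hypothesis and the lower bound $\gdfin(G)\geq 1$, but otherwise the arguments coincide.
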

\begin{proof}
Let $G$ be a non-trivial virtually free group. Then  
$G$ is the fundamental group of a graph of groups in which all vertex groups are finite; see for example \cite[Theorem 7.3]{MR0564422}  and notice that this structure result holds for virtually free groups that may not be finitely generated. 
 This implies that
 $\gdfin(G)=1$. Since $G$ is not virtually cyclic  it follows from  \cref{virt:dim:of:grps:gd:1} that $\gdvc(G)= 2$.  
\end{proof}

The induction step in the proof to of \cref{cyclic:dimension:nor:poly:free:1} (b) requires having an upper bound for the virtually cyclic dimension of a free-by-cyclic group. To obtain it in \cref{free:by:cyclic:dimension} below, we use the following  condition  introduced by Lück.

\begin{definition}\cite[Condition 4.1]{MR2545612}
 We say that a group $G$ satisfies {\it condition $(C)$} if for every $g,h\in G$ with $|h|=\infty$ and $k,l\in \Z$ we have that $gh^kg^{-1}=h^l$ implies that $|k|=|l|$. 
\end{definition}

 \begin{lemma}\label{lemma:condition C}
 Consider a short exact sequence of groups $$1\to F\to G \to \Z \to 1$$  such that $F$ is a  free group. Then the group $G$ satisfies condition $(C)$.
 \end{lemma}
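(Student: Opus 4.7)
The plan is to split into two cases according to the image of $h$ under the quotient $\pi\colon G\twoheadrightarrow \Z$, with the main work happening inside $F$. Since $F$ is normal in $G$, conjugation by $g$ restricts to an automorphism $\phi_g\in\Aut(F)$, and I want to reduce the question to a statement purely about automorphisms of a free group.

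First, suppose $\pi(h)\neq 0$. Applying $\pi$ to the hypothesis $gh^k g^{-1}=h^l$ gives $k\pi(h)=l\pi(h)$ in $\Z$, so $k=l$ and the conclusion is immediate. Suppose instead $\pi(h)=0$; then $h\in F$, and the equation becomes $\phi_g(h)^k=h^l$ in $F$. If $k=0$ or $l=0$, the other exponent is forced to vanish because $h$ has infinite order, so I may assume $k,l\neq 0$ and work entirely in the free group $F$.

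The main technical input is the standard structure of centralizers and roots in a free group: every non-trivial element $x\in F$ is contained in a unique maximal infinite cyclic subgroup, generated by a \emph{root} $\sqrt{x}$ (an element that is not a proper power), which is unique up to inversion; moreover the centralizer of $x^n$ in $F$ coincides with the centralizer of $x$ for any $n\neq 0$. Applied to $\phi_g(h)^k=h^l$, the centralizer statement shows that $h$ commutes with $\phi_g(h)$, so these two elements lie in a common cyclic subgroup of $F$.

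Writing $h=z^m$ with $z=\sqrt{h}$ not a proper power and $m\geq 1$, the fact that $\phi_g$ is an automorphism ensures $\phi_g(z)$ is also not a proper power, and $\phi_g(h)=\phi_g(z)^m$. Since $h$ and $\phi_g(h)$ generate a common cyclic subgroup, uniqueness of roots up to inversion forces $\phi_g(z)=z^{\pm 1}$, whence $\phi_g(h)=h^{\pm 1}$. Substituting back into $\phi_g(h)^k=h^l$ and using that $h$ has infinite order yields $\pm k=l$, i.e.\ $|k|=|l|$. The main obstacle is the case $\pi(h)=0$, where one must translate the mixed equation in $G$ into a purely free-group statement and then exploit that automorphisms of a free group preserve the property of being a proper power; everything else is bookkeeping.
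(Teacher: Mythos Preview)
Your proof is correct and follows essentially the same approach as the paper: both split on whether the image of $h$ in $\Z$ is zero, dispose of the nonzero case by projecting, and in the remaining case use that conjugation by $g$ restricts to an automorphism of $F$ together with the unique maximal cyclic subgroup (root) property of free groups to conclude that this automorphism sends $h$ to $h^{\pm 1}$. The only cosmetic difference is that the paper works in explicit semidirect-product coordinates and phrases the key step as ``the automorphism must preserve $C_{\max}$,'' whereas you reach the same conclusion via the centralizer identity $C_F(x^n)=C_F(x)$; your $\phi_g$ is exactly the paper's $c_y\circ\varphi^b$.
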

 \begin{proof}
 Consider the automorphism $\varphi:F\rightarrow F$ such that $G\cong F\rtimes_{\varphi} \Z$.
 
Let $(x,a),(y,b)\in F\rtimes_{\varphi} \Z$ with $|(x,a)|=\infty$ and $k,l\in \Z$ such that $$(y,b)(x,a)^{k}(y,b)^{-1}=(x,a)^l.$$ Then $b+ka-b=la$; if $a\neq 0$, it follows that $k=l$.

Now assume that $a=0$ and $x\neq e_F$. Then we have  that $(y,b)(x^k,0)(y,b)^{-1}=(x^l,0)$, which implies that $y\varphi^b(x^k)y^{-1}=x^l$ in $F$. In other words, 
\begin{equation}\label{eq:subgroup} c_y\circ\varphi^b(x^k)=x^l,\end{equation} where $c_y:F\rightarrow F$ is the automorphism of $F$ given by conjugation by $y$. 

It is well known that in a free group, every  infinite cyclic subgroup \( C \) is contained in a unique maximal cyclic subgroup \( C_{\text{max}} \).
 Consider the infinite cyclic subgroup $C=\langle x\rangle$ of $F$.
From \cref{eq:subgroup}, we have that the automorphism $c_y\circ\varphi^b$ of $F$ sends the subgroup $\langle x^k\rangle$ of $C$ into the subgroup $\langle x^l\rangle$ of $C$, then it must take $C_{max}$ isomorphically onto $C_{max}$. Therefore,  $c_y\circ\varphi^b|_{C_{max}}$ is an automorphism of the infinite cyclic group $C_{max}$ and hence $c_y\circ\varphi^b|_{C_{max}}=\pm \text{Id}_{C_{max}}$. It follows that  $x^l=c_y\circ\varphi^b(x^k)=\pm \text{Id}_{C_{max}}(x^k)=x^{\pm k}$, then $|k|=|l|$. 
 \end{proof}

 \begin{remark} More generally, notice that the argument in \cref{lemma:condition C} shows that a semi-direct product $G\cong F\rtimes_{\varphi} \Z$ satisfies condition $(C)$ whenever the group $F$ is torsion free and satisfies that
 ``any infinite  cyclic subgroup $C$ of $F$ is contained in a unique maximal cyclic subgroup $C_{max}$ of $F$''. See \cite[Section 3]{LW12} for examples of such groups.
 \end{remark}
 
 Given $H$  a subgroup of $G$, we denote the  normalizer of $H$ in $G$ by $N_G(H)$ and the corresponding Weyl group by  $W_G(H)=N_G(H)/H$.
 
\begin{lemma}\cite[Lemma 4.4]{MR2545612}\label{luck:condition C}
 Let $n$ be an integer. Suppose  that $G$ satisfies condition $(C)$. Suppose that $\gdfin(G)\leq n$ and for every infinite cyclic subgroup $H$ of $G$ we have $\gdfin(W_G(H))\leq n$. Then $\gdvc(G)\leq n+1$.  
\end{lemma}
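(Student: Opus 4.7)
The plan is to invoke the Lück–Weiermann push-out construction from \cite{LW12}, which builds a model for $\underline{\underline{E}}G$ out of a model for $\underline{E}G$ together with intermediate classifying spaces for commensurators of infinite virtually cyclic subgroups. Concretely, impose on $\vcyc \setminus \fin$ the equivalence relation $H \sim K$ iff $|H \cap K| = \infty$, let $[H]$ denote the equivalence class of $H$, write $N_G[H]$ for its commensurator, and let $\vcyc[H]$ denote the family of subgroups of $N_G[H]$ that are either finite or lie in $[H]$. The push-out combines $\underline{E}G$ with models for $E_{\fin}(N_G[H])$ and $E_{\vcyc[H]}(N_G[H])$, and its dimension is controlled by the maximum of $\gdfin(G)$, $\gdfin(N_G[H])+1$, and $\gd_{\vcyc[H]}(N_G[H])$ taken over representatives $[H]$.

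The first step is to use condition $(C)$ to reduce the analysis to normalizers of infinite cyclic subgroups. Since every infinite virtually cyclic group contains an infinite cyclic subgroup of finite index, each equivalence class $[H]$ admits a cyclic representative $H = \langle h \rangle$. If $g \in N_G[H]$, then $gh^k g^{-1} = h^l$ for some nonzero $k, l \in \Z$, and condition $(C)$ forces $|k| = |l|$. Choosing $h$ inside a maximal cyclic subgroup of its commensurability class, a commuting argument then upgrades this to $ghg^{-1} = h^{\pm 1}$, so $g \in N_G(H)$ and hence $N_G[H] = N_G(H)$.

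The second step is to construct a model of dimension at most $n$ for $E_{\vcyc[H]}(N_G(H))$. Let $\pi \colon N_G(H) \to W_G(H)$ denote the quotient map. A subgroup $K \leq N_G(H)$ lies in $\vcyc[H]$ if and only if $\pi(K)$ is finite, because $K \cap H$ is a subgroup of $H \cong \Z$, hence either trivial (making $K$ finite) or of finite index in $H$ (making $K$ commensurable with $H$). Pulling back along $\pi$ a model $Y$ for $\underline{E}W_G(H)$ of dimension at most $n$, guaranteed by hypothesis, yields an $N_G(H)$-CW-complex of the same dimension whose isotropy groups belong to $\vcyc[H]$ and whose fixed sets under members of $\vcyc[H]$ are contractible. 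This is the required model.

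Feeding these pieces into the push-out produces a $G$-CW-model for $\underline{\underline{E}}G$ of dimension at most $n+1$: indeed, $\gdfin(G) \leq n$ by hypothesis, $\gdfin(N_G[H]) \leq \gdfin(G) \leq n$ by restricting the universal action to the subgroup $N_G[H]$, and $\gd_{\vcyc[H]}(N_G[H]) \leq n$ by step two, so the controlling maximum is at most $n+1$. The main obstacle I anticipate is the execution of step one: extracting $ghg^{-1} = h^{\pm 1}$ from the consequence $(ghg^{-1})^k = h^{\pm k}$ of condition $(C)$ is not automatic when the subgroup $\langle h, ghg^{-1} \rangle$ has torsion, and making the argument rigorous requires either a judicious choice of cyclic representative within the commensurability class of $H$ or an auxiliary uniqueness-of-roots argument exploiting the infinite order of $h$.
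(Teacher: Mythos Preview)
The paper does not supply its own proof of this lemma; it is quoted from \cite[Lemma~4.4]{MR2545612} and used as a black box in the proof of \cref{free:by:cyclic:dimension}. Your outline via the L\"uck--Weiermann push-out \cite{LW12} is exactly the strategy of the cited source, and your second step---identifying the family $\vcyc[H]$ on $N_G(H)$ with the pull-back of $\fin$ along $N_G(H)\to W_G(H)$, so that $\gd_{\vcyc[H]}(N_G(H))\le \gdfin(W_G(H))\le n$---is the heart of that argument.

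The obstacle you flag in step one is genuine rather than cosmetic: from $(ghg^{-1})^{k}=h^{\pm k}$ one cannot in a general group deduce $ghg^{-1}=h^{\pm 1}$, and a maximal cyclic representative of the commensurability class need not exist, so the bare equality $N_G[H]=N_G(H)$ is not a consequence of condition $(C)$ alone. What condition $(C)$ does yield is that every $g\in N_G[H]$ normalises $\langle h^{k}\rangle$ for some $k\ge 1$, whence $N_G[H]$ is the directed union $\bigcup_{m} N_G(\langle h^{m!}\rangle)$. Your pull-back bound then applies to each term of this union, and the passage to the colimit costs at most one extra dimension; feeding $\gd_{\vcyc[H]}(N_G[H])\le n+1$ together with $\gdfin(N_G[H])+1\le n+1$ into the push-out gives the stated bound $\gdvc(G)\le n+1$. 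This is how the point is handled in the cited reference, so your diagnosis of the difficulty is correct and only the resolution was missing.
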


\begin{proposition}\label{free:by:cyclic:dimension}
 Consider a short exact sequence of groups $$1\to F\to G \to \Z \to 1$$  such that $F$ is a  free group. 
 \begin{enumerate}[a)]
    \item  The geometric dimension $\gd(G)\leq 2$, and equality holds if the group $G$ is not free. 
    \item The virtually cyclic geometric dimension satisfies ${2\leq}\gdvc(G)\leq 3$. 
 \end{enumerate}
\end{proposition}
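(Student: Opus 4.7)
The plan is to dispose of part (a) quickly and focus the bulk of the argument on part (b).

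For part (a), the filtration $1 \triangleleft F \triangleleft G$ exhibits $G$ as a poly-free group of length at most $2$, so \cref{cyclic:dimension:nor:poly:free:1}(a) gives $\gd(G) \leq 2$. If $G$ is not free, then the Stallings--Swan theorem ($\cd(G) \leq 1$ iff $G$ is free) forces $\cd(G) \geq 2$, whence $\gd(G) \geq \cd(G) = 2$ and equality holds.

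For the lower bound in part (b), I would argue by cases on $F$, which we assume non-trivial. If $F$ has rank at least $2$, then it contains $F_2$, and subgroup monotonicity of $\gdvc$ (\cite[Lemma 2.2]{MR4472883}) together with \cref{cyclic:dim:countable:free:group} gives $\gdvc(G) \geq \gdvc(F_2) = 2$. If $F \cong \mathbb{Z}$, a direct check shows $G$ contains $\mathbb{Z}^2$ as a subgroup of index at most $2$ (both in the split case $\mathbb{Z}^2$ and the Klein bottle case), and \cite[Theorem 5.13]{LW12} gives $\gdvc(\mathbb{Z}^2) = 3$, so again $\gdvc(G) \geq 2$.

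For the upper bound $\gdvc(G) \leq 3$, the plan is to apply L\"uck's \cref{luck:condition C} with $n = 2$. Condition $(C)$ is \cref{lemma:condition C}, and $\gdfin(G) \leq 2$ follows from part (a) since $G$ is torsion free. The remaining hypothesis, namely $\gdfin(W_G(H)) \leq 2$ for every infinite cyclic $H \leq G$, is the main obstacle; I would attack it by splitting on whether a generator $h$ of $H$ lies in $F$, via the projection $\pi \colon G \to \mathbb{Z}$.

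If $h \in F$, then $h$ lies in a unique maximal cyclic subgroup $\langle h_0 \rangle$ of $F$, with $N_F(H) = C_F(H) = \langle h_0 \rangle$. Using the free-group fact that $w^k = h_0^k$ with $k \neq 0$ implies $w = h_0$, I would show that every element of $C_G(H)$ commutes with $h_0$, so $C_G(H)$ is abelian of rank at most $2$ and hence isomorphic to $\mathbb{Z}$ or $\mathbb{Z}^2$. The normalizer $N_G(H)$ contains $C_G(H)$ with index at most $2$, so $W_G(H) = N_G(H)/H$ is virtually cyclic and $\gdfin(W_G(H)) \leq 1$. If instead $h \notin F$, then $H \cap F = 1$ and $N_G(H) = C_G(H)$ (since conjugating $h$ to $h^{-1}$ would force $2\pi(h) = 0$); restricting the extension $1 \to F \to G \to \mathbb{Z} \to 1$ yields $1 \to C_F(h) \to N_G(H) \to c\mathbb{Z} \to 1$ for some $c \geq 1$, and passing to the quotient by $H$ gives $1 \to C_F(h) \to W_G(H) \to \mathbb{Z}/k\mathbb{Z} \to 1$ with $C_F(h) \leq F$ free. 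Thus $W_G(H)$ is virtually free, giving $\gdfin(W_G(H)) \leq 1$ in this case as well. L\"uck's lemma then yields $\gdvc(G) \leq 3$.
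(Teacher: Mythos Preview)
Your proof is correct and follows essentially the same approach as the paper: part (a) via \cref{cyclic:dimension:nor:poly:free:1}(a) plus Stallings--Swan, and part (b) via L\"uck's \cref{luck:condition C} with $n=2$, splitting on whether $p(H)$ is trivial. The only cosmetic differences are that in the case $h\in F$ you route through the centralizer (showing $C_G(H)=C_G(h_0)$ is abelian of rank $\le 2$) where the paper just reads off the short exact sequence $1\to N_F(H)/H\to W_G(H)\to Q\to 1$, and for the lower bound when $F\cong\mathbb{Z}$ you invoke $\mathbb{Z}^2\le G$ and \cite{LW12} where the paper cites \cite{CFH06} directly for the $2$-crystallographic group $G$.
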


\begin{proof} If $G$ is a free group, then $\gd(G)=1$ and $\gdvc(G)\leq 2$ by \cref{cyclic:dim:countable:free:group}. 

Suppose that $G$ is not a free group.  First we prove item $a)$. Since $G$ is a poly-free group of length $2$, by \cref{cyclic:dimension:nor:poly:free:1}(a) we have  that $\gd(G)\leq 2$. On the other hand, since $G$ is non-trivial and non-free, then  $\cd(G)\geq 2$ by the Stallings and Swan theorem. Hence $\cd(G)=\gd(G)= 2$.

We prove item $b)$.  For the lower bound notice that if $F=\mathbb{Z}$, then $G$ is isomorphic to $\Z^2$ or $\Z\rtimes \Z$, in both cases $G$ is a 2-crystallographic group, it follows from \cite{CFH06} that $\gdvc(G)=3$. If $F$ is not cyclic, then it follows from \cref{cyclic:dim:countable:free:group} that $2=\gdvc(F)\leq \gdvc(G)$. 

 We prove next that $\gdvc(G)\leq3$.
By \cref{lemma:condition C}, a free-by-cyclic group $G\cong F\rtimes \Z$ satisfies condition $(C)$. Therefore, from \cref{luck:condition C} and item $a)$, it is enough to show that the geometric dimension $\gdfin(W_G(H))\leq 2$ for any infinite cyclic subgroup $H$ of $G$. 

Let $H$ be a cyclic subgroup of $G$. From the short exact sequence $1\to F\to G\xrightarrow[]{p} \Z\to 1 $ we have, by restriction, the  short exact sequence 
 \begin{equation}\label{ses:1}
     1\to F\cap N_G(H)\to  N_G(H)\to Q\to 1,
 \end{equation}
 where $Q$ is a subgroup of $\Z$.  We need to consider two cases: $i)$ $p(H)=0$ or  $ii)$ $p(H)\neq 0$. 
 
In case $i)$,  we assume that $p(H)=0$, then  $H\subseteq F$. Since $F$ is a free group, it follows that   $F\cap N_G(H)= N_F(H) $ is a cyclic subgroup of $F$  and $(F\cap N_G(H))/H$ is finite. From the short exact sequence (\ref{ses:1})  we obtain   
$$1\to (F\cap N_G(H))/H\to  W_G(H)\to Q\to 1,$$
and therefore $W_G(H)$ is a virtually cyclic group.

In the case $ii)$, we have $p(H)\neq 0$ and $Q/p(H)$ is a finite cyclic group. From the short exact sequence (\ref{ses:1}) we get
$$1\to F\cap N_G(H)\to  W_G(H)\to Q/p(H)\to 1,$$
where $F\cap N_G(H)$ is a free group. Hence $W_G(H)$ is a virtually free group. 

In both cases $i)$ and $ii)$ we have that $\gdfin(W_G(H))\leq 1$ and hence $\gdvc(G)\leq3$.
\end{proof}

\begin{remark}
 If we assume in \cref{free:by:cyclic:dimension}  that $G$ is a countable group, then the result can be deduced from \cite[Corollary 3]{MR3570151}.  See also \cite[Theorem B]{MR3252961} for a criterion for more general groups that fit into an extension with torsion-free quotient to admit a finite-dimensional classifying space with virtually cyclic stabilizers. \end{remark}

The following result of Lück and Weiermann allows us to relate the geometric dimensions associated to nested families of subgroups.

\begin{lemma}\cite[Proposition 5.1 (i)]{LW12}\label{lw:nested families}
    Let $\mathcal{F}\subseteq \mathcal{G}$ be two families of subgroups of a group $G$. Let $n\geq 0$ be an integer such that for any $H\in\mathcal{G}$, there is an $n$-dimensional model for $E_{\mathcal{F}\cap H}(H)$. Then $\gd_{\mathcal{F}}(G)\leq \gd_{\mathcal{G}}(G)+n$. 
\end{lemma}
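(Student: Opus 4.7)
The plan is to build an explicit $G$-CW-model for $E_{\mathcal{F}}G$ of dimension at most $d+n$, where $d=\gd_{\mathcal{G}}(G)$, by amplifying a minimal-dimensional model for $E_{\mathcal{G}}G$: each equivariant cell with isotropy $H\in\mathcal{G}$ will be replaced by a product with the given $n$-dimensional model for $E_{\mathcal{F}\cap H}H$.

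First I would fix a $G$-CW-model $Y$ for $E_{\mathcal{G}}G$ of dimension $d$. Every isotropy subgroup appearing in $Y$ lies in $\mathcal{G}$, so the hypothesis provides, for each such $H$, an $n$-dimensional $H$-CW-model $Z_H$ for $E_{\mathcal{F}\cap H}H$. I would choose one such $Z_H$ per $G$-conjugacy class of isotropies and extend the choice $G$-equivariantly, so that $Z_{gHg^{-1}}=g\cdot Z_H$ in a consistent way.

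Next I would construct a $G$-CW-complex $X$ of dimension at most $d+n$, together with a $G$-map $p\colon X\to Y$, by induction on the equivariant skeleta of $Y$. At the inductive step, if $Y^{(k)}$ is obtained from $Y^{(k-1)}$ by attaching orbits of cells $G\times_{H_i}D^k$ via $\phi_i\colon G\times_{H_i}S^{k-1}\to Y^{(k-1)}$, I would attach to $X^{(k-1)}$ the equivariant cells $G\times_{H_i}(D^k\times Z_{H_i})$ along a lift $\widetilde{\phi}_i\colon G\times_{H_i}(S^{k-1}\times Z_{H_i})\to X^{(k-1)}$ of $\phi_i$. The existence of such a lift is an equivariant obstruction-theoretic statement: the fibers of $p$ over cells of the boundary with isotropy $H'\supseteq H_i$ are copies of $Z_{H'}$, and the relevant fixed point sets $(Z_{H'})^{H_i}$ are contractible since $H_i\in\mathcal{F}\cap H'$, which kills all obstructions. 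Each new piece has dimension at most $k+n\leq d+n$, so the dimension bound is preserved.

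Finally I would verify that $X$ is a model for $E_{\mathcal{F}}G$. The isotropy condition is immediate: every point of $X$ lies in some piece $G\times_{H_i}(D^k\times Z_{H_i})$, and its stabilizer is subconjugate to an isotropy of $Z_{H_i}$, hence belongs to $\mathcal{F}\cap H_i\subseteq\mathcal{F}$. For the fixed-point condition, given $K\in\mathcal{F}$, the projection $p\colon X\to Y$ restricts to $p^K\colon X^K\to Y^K$. Since $K\in\mathcal{F}\subseteq\mathcal{G}$, $Y^K$ is contractible; and over each open cell of $Y^K$ with isotropy $H\supseteq K$ the fiber of $p^K$ is $(Z_H)^K$, which is contractible because $K\in\mathcal{F}\cap H$. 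A homotopy colimit (or Mayer--Vietoris) argument along the cellular filtration then promotes this fiberwise contractibility into the contractibility of $X^K$.

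The main technical obstacle is the second step: carrying out the lifts and gluings equivariantly and coherently across cells whose boundary strata have larger, possibly differently-chosen, stabilizers. The cleanest way to avoid bookkeeping is to encode the construction as an iterated $G$-homotopy pushout, as in Lück--Weiermann, which packages the entire amplification process into a single homotopy colimit over the cell poset of $Y$; this automatically produces a $G$-CW-complex of the claimed dimension and makes the verification of both defining properties of $E_{\mathcal{F}}G$ a formal consequence.
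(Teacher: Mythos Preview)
The paper does not give its own proof of this lemma; it is quoted verbatim from L\"uck--Weiermann and used as a black box. Your overall strategy---amplifying a model $Y$ for $E_{\mathcal{G}}G$ by replacing each equivariant cell $G\times_{H}D^k$ with $G\times_{H}(D^k\times Z_H)$---is exactly the construction carried out in that reference, so in spirit you are reproducing the cited proof rather than offering an alternative.

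There is, however, a genuine slip in your justification of the lifting step. You assert that $(Z_{H'})^{H_i}$ is contractible ``since $H_i\in\mathcal{F}\cap H'$'', but $H_i$ is merely an isotropy group of $Y$ and hence lies in $\mathcal{G}$, not necessarily in $\mathcal{F}$; when $H_i\notin\mathcal{F}$ the fixed set $(Z_{H'})^{H_i}$ is in fact empty, so this cannot be what kills the obstruction. The correct reason the lift exists is different: you need, for each $H'\supseteq H_i$ occurring in the boundary, an $H_i$-equivariant map $Z_{H_i}\to Z_{H'}$, and this is supplied by the universal property of classifying spaces, because $Z_{H'}$ restricted to $H_i$ is again a model for $E_{\mathcal{F}\cap H_i}(H_i)$ (its isotropies lie in $\mathcal{F}\cap H'\cap H_i\subseteq\mathcal{F}\cap H_i$, and its $K$-fixed sets are contractible for $K\in\mathcal{F}\cap H_i\subseteq\mathcal{F}\cap H'$). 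Uniqueness of such maps up to $H_i$-homotopy is what allows coherent gluing across strata. With this correction in place your outline goes through and agrees with the L\"uck--Weiermann argument.
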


We are ready to prove \cref{cyclic:dimension:nor:poly:free:1} (b).\medskip

\noindent{\bf \cref{cyclic:dimension:nor:poly:free:1} (b) }  Let  $G$ be  a  normally  poly-free group of length $n$.  Then $\gdvc(G)\leq 3(n-1)+2$.

\begin{proof} The proof is done by induction on $n$.
If $n=1$, the statement follows from \cref{cyclic:dim:countable:free:group}. Suppose that the result holds for normally poly-free groups of length $n\leq k-1$.

Let $G$ be a  normally  poly-free group  of length $k$. Then there is a filtration  of $G$ by subgroups   $1=G_0<G_1<\cdots< G_{k-1}< G_k=G$  such that $G_i$ is normal in $G$, and the quotient $G_{i+1}/G_i$ is a  free group. We consider the following short exact sequence 
$$1\to G_{1}\to G\xrightarrow[]{p} G/G_{1}\to 1.$$ 
Note that $G/G_1$ is a  normally  poly-free group of length $\leq k-1$. Consider the family pull-back $p^{*}(\calG)$ of the family $\calG$  of virtually cyclic subgroups of $G/G_1$, i.e. $p^{*}(\calG)$ is the family of subgroups of $G$ generated by   $$\{p^{-1}(L): L \text{ is a virtually cyclic subgroup of } G/G_1 \}.$$
Note that a model $X$ of $E_{\calG}(G/G_1)$ is a model of $E_{p^{*}(\calG)}G$ via the action given by projection  $p$, then  $\gd_{p^{*}(\calG)}(G)\leq \gd_{\calG}(G/G_1)=\gdvc(G/G_1)$. Let $\vcyc$ denote the family of virtually cyclic subgroups of $G$ and notice that $\vcyc\subseteq p^{*}(\calG)$. By \cref{lw:nested families} we have

\begin{equation}\label{eq1}
\begin{split}
    \gdvc(G)&\leq \gdvc (G/G_1)+ \max\{\gd_{\vcyc\cap p^{-1}(L)}(p^{-1}(L)): \ L\in\mathcal{G}\}. 
    \end{split}
\end{equation}
We show that  $\gd_{\vcyc\cap p^{-1}(L)}(p^{-1}(L))\leq 3$ for any $L\in\mathcal{G}$. 

If $L$ is the trivial group, then $p^{-1}(L)\cong G_1$ is a free group and $\gd_{\vcyc\cap p^{-1}(L)}(p^{-1}(L))=\gdvc(p^{-1}(L))= 2$  by \cref{cyclic:dim:countable:free:group}. Otherwise, $L$ is an infinite cyclic subgroup of the torsion free group $G/G_1$. From the short exact sequence $$1\to G_1 \to  p^{-1}(L)\to L\to 1,$$ we see that $ p^{-1}(L)$ is a free-by-cyclic group. By \cref{free:by:cyclic:dimension} we have that $$\gd_{\vcyc\cap p^{-1}(L)}(p^{-1}(L))=\gdvc(p^{-1}(L))\leq 3.$$

From \cref{eq1} and the induction hypothesis we have $\gdvc(G)\leq \gdvc(G/G_1)+ 3\leq3(k-1)+2.$
\end{proof}

\bibliographystyle{alpha} 
\bibliography{mybib}
\end{document}